\documentclass[11pt,onecolumn]{article}
\usepackage[top=1 in, bottom=1 in, left=0.75 in, right=0.75 in]{geometry}
\usepackage{hyperref}
\usepackage{amsmath} 
\usepackage{amssymb}
\usepackage{tabularx}
\usepackage[sc,osf,noBBpl]{mathpazo} 
\usepackage{cite} 
\usepackage{graphicx} 
\usepackage{subcaption}
\usepackage{algorithm}
\usepackage{algorithmic}
\usepackage{amssymb}
\usepackage{epsfig,xcolor,url}
\usepackage{epstopdf}
\usepackage{bbm}
\usepackage{enumitem} 

\usepackage{stfloats}
\usepackage{tikz}
\usetikzlibrary{arrows.meta,positioning}

\usepackage[utf8]{inputenc}
\usepackage{amsthm} 
\renewenvironment{proof}{{\sffamily\bfseries Proof. }}{\qed}
\newtheorem{theorem}{Theorem}
\newtheorem{lemma}{Lemma}

\theoremstyle{definition}

\newtheorem{assum}{Assumption}

\usepackage{thmtools}
\usepackage{thm-restate} 
\usepackage{enumitem}

\title{Steady-State Approximation Error of Heterogeneous Mean-Field Models}

\author{
	Lei Ying \\
	University of Michigan, Ann Arbor \\  \texttt{leiying@umich.edu}\\
}

\begin{document}
\date{}
\maketitle

\begin{abstract}
This paper studies heterogeneous mean-field models in which agent parameters are sampled from a population distribution. We establish an \(O(1/M)\) bound on the steady-state mean-square error between the occupancy measure of the $M$-agent system and the corresponding annealed mean-field equilibrium. The analysis extends Stein's method for homogeneous mean-field models and reveals a fundamental difference between homogeneous and heterogeneous systems. While stability of the mean-field dynamics is sufficient in the homogeneous setting, heterogeneous systems further require uniform robustness of the occupancy dynamics with respect to perturbations of the initial condition. The results are illustrated through a heterogeneous SIS epidemic model.

\end{abstract}

\section*{\normalsize The Role of Generative AI (GenAI) }
This paper is a collaborative effort between the author and GenAI. ChatGPT (GPT-5.5) was extensively used in the preparation of this manuscript. Most proofs were generated, corrected, and refined through iterative interactions between ChatGPT and the author. The author verified all results, organized and edited the paper, and takes full responsibility for its correctness and content.

\paragraph{Level of GenAI involvement:} Collaborative.

\section{Introduction}

Mean-field models provide tractable approximations for large-scale stochastic systems by replacing stochastic interactions among many agents with deterministic dynamics. They have been widely used in epidemic models, queueing and load-balancing systems, communication networks, and interacting particle systems. For homogeneous systems, \cite{Yin_16} established an \(O(1/M)\) bound on the steady-state mean-square approximation error for a broad class of mean-field models. The analysis is based on Stein's method \cite{BarChe_05} and shows that exponential stability of the mean-field dynamics plays a central role in determining the approximation error.

Many systems in practice are inherently heterogeneous, such as  an epidemic model with heterogeneous infection rates. For heterogeneous mean-field models, {finite-time} approximation results have  been established in \cite{AllGas_22}, which characterize the approximation error of the {marginal} state distribution of individual agents over a finite time horizon.  Our paper differs from~\cite{AllGas_22} in two
fundamental aspects.
\begin{itemize}

\item
\textbf{Finite-time versus steady-state approximation.}
The results in~\cite{AllGas_22} concern finite-time
approximation errors for process-level dynamics, pioneered in the seminal work of Kurtz~\cite{Kur_71,Kur_81}. This
paper studies steady-state approximation errors. In general,
finite-time convergence does not imply convergence of
stationary distributions, and establishing steady-state
convergence requires additional arguments related
to stability and interchange of limits. Therefore, a finite-time approximation bound does not directly apply to steady-state approximation. 

\item
\textbf{Marginal probabilities versus occupancy measures.}
The approximation results in~\cite{AllGas_22} are for marginal state probabilities of individual agents.
In contrast, this paper studies the convergence of the
population-level occupancy measure, which is central to
mean-field approximation.
To illustrate the distinction, consider \(M\) Bernoulli
random variables satisfying
\(W_1=W_m,\ \forall m,\)
so that they are fully correlated. In this case, the marginal probability of an agent is
\(\Pr(W_m=1)=p\)
 but the occupancy measure
\(X
=
\frac1M
\sum_{m=1}^M
\mathbf 1_{\{W_m=1\}}\)
is a Bernoulli random variable with parameter \(p\).
Therefore, accurate approximation of marginal
distributions of individual agents alone does not directly imply the same for 
the occupancy measure.

\end{itemize}

\subsection{Main Result}
Assume each agent evolves on a common finite state space
\(
\mathcal S=\{1,\dots,n\}.
\)
Let
\(
W_m(t)\in\{e_1,\dots,e_n\}\subset\mathbb R^n
\)
represent the state of agent \(m\) at time \(t\), where
\(e_i\) is the \(i\)-th standard basis vector.
The empirical occupancy measure is
\[
X_{\nu_M}(t)
=
\frac1M
\sum_{m=1}^M
W_m(t),
\] where $\nu_M$ is the empirical distribution of the agent types. 
This paper establishes sufficient conditions under which
\[
\mathbb E
\left[
\|X_{\nu_M}(\infty)-x^*\|^2
\right]
=
O\left(\frac1M\right),
\]
where \(X_{\nu_M}(\infty)\) denotes the stationary occupancy
measure of the heterogeneous $M$-system with empirical parameter distribution $\nu_M$, the expectation
is taken over both the stationary distribution and the
agent parameters, and \(x^*\) is the annealed
mean-field equilibrium defined in the next section. This approximation error implies that, for typical realizations
of the empirical type distribution \(\nu_M\), the occupancy
measure concentrates around the annealed equilibrium
\(x^*\) as \(M\to\infty\),
hence establishing the accuracy of the mean-field
approximation for large systems. 

We remark that in \cite{Yin_16}, the key condition underlying the $O(1/M)$ steady-state approximation error for homogeneous mean-field models is the exponential stability of the mean-field dynamics. For heterogeneous systems, stability alone is no longer sufficient. In addition, we require uniform robustness of occupancy measure dynamics with respect to perturbations of the initial condition. Intuitively, heterogeneous systems require not only convergence of trajectories toward equilibrium, but also uniform control of how trajectories vary across different agent types. Furthermore, to compare the finite heterogeneous system with the annealed mean-field equilibrium, we require the equilibrium map to be Lipschitz continuous with respect to the type distribution. This additional regularity condition is not needed in the homogeneous setting.  

\section{Models}

\subsection{Heterogeneous $M$-agent CTMC}

Consider a Continuous Time Markov Chain (CTMC) with $M$ heterogeneous agents. Each agent
$m$ is associated with a parameter $\theta_m\in \Theta$, independently
sampled from a continuous distribution $\nu$. Conditioning on
the realized parameters
\(\theta_1,\dots,\theta_M,\)
we define the empirical type distribution
\begin{equation}
\nu_M
=
\frac{1}{M}
\sum_{m=1}^M
\delta_{\theta_m},
\label{eq:nu}    
\end{equation}
where $\delta_{\theta_m}$ denotes the Dirac probability
measure concentrated at $\theta_m$. Since $\nu$ is continuous,
the parameters $\theta_m$ are distinct almost surely.

Each agent evolves on a finite state space
\(
\mathcal S=\{1,\ldots,n\}.
\)
 Let
\(
W(t)=(W_1(t),\ldots,W_M(t))
\) denote the state of the $M$-agent system, where
\(
W_m(t)\in\mathcal \{e_1, \cdots, e_n\}\subset \mathbb R^n,
\) is the one-hot vector representing the state of agent $m$ and $e_i$ is the $i$-th standard basis vector.
The empirical occupancy vector is
\[
X_{\nu_M}(t)
=
\frac1M
\sum_{m=1}^M
W_m(t),
\]
and the interaction field is
\[
Z_{\nu_M}(t)=\psi(X_{\nu_M}(t)).
\]

We assume that $W(t)$ is an irreducible CTMC with generator $G_M.$ Given $W(t)=W,$ let \(\mathcal R_M(W)\) denote the set of possible transitions from state
\(W\). For each transition \(r\in\mathcal R_M(W)\), let
\(q_r(W)\)
denote its transition rate and let
\(\Delta_r(W)\)
denote the corresponding state change. Define 
$$f_{\nu_M}(W)=\sum_{r\in\mathcal R_M(W)}
q_r(W) \Delta_r(W).
$$
We assume that $f_{\nu_M}(W)$ has the following structure
$$f_{\nu_M}(W)=(f\bigl(W_1,\psi(X(W));\theta_1\bigr), \cdots, f\bigl(W_M,\psi(X(W));\theta_M\bigr))^\top$$
where \[
X(W)
=
\frac1M
\sum_{m=1}^M
W_m
\] and
\[
f(w,z;\theta):
\Delta^{n-1}\times\mathbb R^d\times\Theta
\to
\mathbb R^n
\]
is a vector field such that for an agent with parameter $\theta$, when applying the generator component wise on $W_m,$
\[
(G_M W_m)(W)
=
f\bigl(W_m,\psi(X(W));\theta_m\bigr).
\]
The vector field $f$ represents the infinitesimal drift of agent $m$ induced by the finite CTMC and will be used to describe the mean-field dynamics later.  We assume for agent $m,$ it depends only on the state of agent $m,$ its parameter, and the empirical occupancy measure.

\subsection{Heterogeneous Mean-Field Models}
For each realized parameter \(\theta_m\), define
\(
w_{m}(t)
\in \Delta^{n-1}
\) as the mean-field approximation to the state-distribution vector of agent $m,$ i.e., $w_m(t)\approx \mathbb E[W_m(t)|\theta_m].$ We then define
$$x(t)=\frac1M\sum_{m=1}^Mw_m(t)\quad \hbox{and}\quad z(t)=\psi(x(t)).$$
We define two {\em deterministic} mean-field systems that approximate the evolution of $w_m(t)$: a quenched mean-field system and an annealed mean-field system.

\subsubsection{Quenched Mean-Field System}
Given realized parameters $\{\theta_1,\cdots, \theta_M\}$ and the realized
empirical distribution $\nu_M$ defined in \eqref{eq:nu},
the {\em quenched} mean-field system for agent $m$ is 
\begin{align*}
\dot{w}_m(t)
=f(w_m(t),z(t);\theta_m).
\end{align*} Let $w_{\nu_M}(t;\hat w)=(w_{\nu_M, 1}(t;\hat w),\dots,w_{\nu_M, M}(t;\hat w))^\top$ be the solution of the mean-field system above with initial condition $\hat w.$ 
The equilibrium of this quenched mean-field model is denoted by
\[
w_{\nu_M}^*
=
(w_{\nu_M,1}^*,\dots,w_{\nu_M,M}^*)^\top,
\]
where each equilibrium vector $w_{\nu_M,m}^*$ satisfies
\[
f(w_{\nu_M,m}^*,z_{\nu_M}^*;\theta_m)
=
0,\quad 
x_{\nu_M}^*
=
\frac1M
\sum_{m=1}^M
w_{\nu_M,m}^*, \quad\hbox{and}\quad
z_{\nu_M}^*
=
\psi\left(x_{\nu_M}^*\right).
\]

\subsubsection{Annealed Mean-Field System}

The quenched mean-field model depends on the realized
empirical distribution $\nu_M.$
The {\em annealed mean-field model} is a system that ``averages'' over the quenched systems.

For each parameter value \(\theta\in\Theta\), let
\(
w(t,\theta)
=
(w_1(t,\theta),\dots,w_n(t,\theta))^\top
\in\Delta^{n-1}
\)
denote the state-distribution vector associated with type
\(\theta\). The deterministic occupancy measure is defined by
\[
x(t)
=
\int_\Theta
w(t,\theta)\,\nu(d\theta),
\] and the corresponding interaction field is
\(
z(t)
=\psi(x(t)).
\)
The annealed mean-field model is 
\[
\dot w(t,\theta)
=
f(w(t,\theta), z(t);\theta),
\qquad \theta\in\Theta,
\]
and the annealed equilibrium is denoted by
$
w^*(\theta),$ which satisfies
\[
f(w^*(\theta), z^*;\theta)=0\ \forall \theta,\quad x^*
=
\int_\Theta
w^*(\theta)\,\nu(d\theta),\quad\hbox{and}\quad
z^*
=
\psi(x^*).
\]

\section{Main Results}
In this section, we establish the steady-state approximation error of both mean-field models. 
We first present some basic assumptions for the CTMCs to be considered in this paper. 
\begin{assum} We make the following assumptions throughout the paper. 
\begin{itemize}
\item
\textbf{A1 - Bounded transition-rate condition.} 
There exists a constant \(c>0\), independent of \(M\), such that the total jump intensity of the CTMC is bounded by \(cM\) uniformly over all system states and realized parameters.

\item
\textbf{A2 - Bounded jump condition.}
There exists a constant $\tilde c>0$, independent of $M$, such that each transition of the CTMC changes the states of at most $\tilde c$ agents. 

\item
\textbf{A3 - Smoothness condition.}
The vector field \(f(w,z;\theta)\) and the interaction map \(\psi(x)\) are twice continuously differentiable in their state arguments. Their first and second derivatives are uniformly bounded independently of \(M\), \(\nu_M\), and \(\theta\).
\end{itemize}
\end{assum}

\begin{theorem}[Steady-State Approximation Error]\label{thm:main}
Consider the heterogeneous \(M\)-agent system introduced in the previous section and the corresponding quenched and annealed mean-field models. Assume that A1-A3 and the following additional conditions hold.

\begin{itemize}
\item
\textbf{A4 - Occupancy stability.}
There exists a constant \(C_0>0\), independent of \(M\), \(\nu_M\), and \(\hat w\), such that
\[
\int_0^\infty
\|x_{\nu_M}(t;\hat w)-x_{\nu_M}^*\|^2\,dt
\le C_0,
\] where $x_{\nu_M}(t;\hat w)=\frac1M \sum_m w_{\nu_M,m}(t;\hat w)$. 

\item
\textbf{A5 - Occupancy robustness.}
Let \(\Delta\) denote the change in the full state vector  $w$ resulting from a single transition of the CTMC.
There exists a constant \(C_1>0\), independent of \(M\), \(\nu_M\), \(\hat w\), and \(\Delta\), such that
\[
\int_0^\infty
\left\|
\nabla_{\hat w}x_{\nu_M}(t;\hat w)[\Delta]
\right\|^2dt
\le
\frac{C_1}{M^2},
\]
and
\[
\int_0^\infty
\left\|
\nabla_{\hat w}^2x_{\nu_M}(t;\hat w)[\Delta,\Delta]
\right\|dt
\le
\frac{C_1}{M^2}.
\]
\end{itemize}

Then the steady-state approximation error with respect to the quenched mean-field equilibrium satisfies
\[
\mathbb E
\left[
\left\|
X_{\nu_M}(\infty)-x_{\nu_M}^*
\right\|^2
\,\middle|\,\nu_M
\right]
=
O\left(\frac1M\right).
\]

Furthermore, if the following condition also holds:
\begin{itemize}
\item
\textbf{A6 - Equilibrium regularity.}
There exists a distance measure \(\|\cdot\|_{\mathcal D}\) on probability measures over \(\Theta\) such that
\[
\mathbb E
\left[
\|\nu_M-\nu\|_{\mathcal D}^2
\right]
=
O\left(\frac1M\right),
\]
and the equilibrium map is Lipschitz continuous with respect to \(\|\cdot\|_{\mathcal D}\), i.e.,
\[
\|x_{\nu_M}^*-x^*\|
\le
L\|\nu_M-\nu\|_{\mathcal D},
\]
\end{itemize}
then the steady-state approximation error with respect to the annealed mean-field equilibrium satisfies
\[
\mathbb E
\left[
\left\|
X_{\nu_M}(\infty) - x^*
\right\|^2
\right]
=
O\left(\frac1M\right).
\]
\end{theorem}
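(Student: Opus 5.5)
We follow the Stein's-method argument of \cite{Yin_16}, adapted to the heterogeneous setting by working on the full agent-level state $w=(w_1,\dots,w_M)$ rather than only on the occupancy vector. Fix the realized parameters (equivalently $\nu_M$). For an initial condition $\hat w\in(\Delta^{n-1})^M$ define the Poisson-equation solution
\[
g(\hat w)=\int_0^\infty \|x_{\nu_M}(t;\hat w)-x_{\nu_M}^*\|^2\,dt ,
\]
which is finite and bounded by $C_0$ by A4. Along the quenched flow, $\frac{d}{dt}g(w_{\nu_M}(t;\hat w))=-\|x_{\nu_M}(t;\hat w)-x^*_{\nu_M}\|^2$. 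Equivalently, writing $F(\hat w)=(f(\hat w_m,\psi(\tfrac1M\sum_k\hat w_k);\theta_m))_m$ for the quenched vector field, we have
\[
\nabla g(\hat w)\cdot F(\hat w)=-\|x(\hat w)-x^*_{\nu_M}\|^2,
\qquad x(\hat w)=\tfrac1M\sum_m\hat w_m .
\]

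Next, evaluate $g$ at the CTMC state $W$, viewed as a point of $(\Delta^{n-1})^M$. Stationarity gives $\mathbb E[G_Mg(W(\infty))\mid\nu_M]=0$, so
\[
\mathbb E\big[\|X_{\nu_M}(\infty)-x^*_{\nu_M}\|^2\mid\nu_M\big]
=\mathbb E\big[G_Mg(W)-\nabla g(W)\cdot F(W)\mid\nu_M\big].
\]
Taylor-expand each jump term to second order:
\[
g(W+\Delta_r)-g(W)=\nabla g(W)[\Delta_r]+\tfrac12\nabla^2 g(\xi)[\Delta_r,\Delta_r].
\]
By the structural assumption on $f_{\nu_M}$, the first-order terms sum exactly to $\nabla g(W)\cdot F(W)$, so they cancel. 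What remains is
\[
\tfrac12\sum_r q_r(W)\nabla^2 g(\xi_r)[\Delta_r,\Delta_r].
\]
By A1 there are at most $cM$ units of total rate, so it suffices to prove $\sup_{\hat w,\Delta}|\nabla^2 g(\hat w)[\Delta,\Delta]|=O(1/M^2)$.

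This Hessian bound is the main step. We will differentiate under the integral:
\[
\nabla^2 g[\Delta,\Delta]=\int_0^\infty 2\|\nabla x_{\nu_M}(t)[\Delta]\|^2+2\big\langle x_{\nu_M}(t)-x^*_{\nu_M},\nabla^2x_{\nu_M}(t)[\Delta,\Delta]\big\rangle\,dt .
\]
The first term is at most $2C_1/M^2$ by the first part of A5. For the second term, $x_{\nu_M}(t)-x^*_{\nu_M}$ is uniformly bounded because both are points of the simplex. Hence that term is at most $2\sqrt2\int\|\nabla^2x[\Delta,\Delta]\|\,dt\le 2\sqrt2\,C_1/M^2$ by the second part of A5. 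Note that A2 is what makes each $\Delta$ an admissible single-transition perturbation (at most $\tilde c$ coordinates changed, each by $O(1)$), so A5 applies to it.

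The delicate points are technical. First, we must justify differentiation under the integral and the $C^2$ regularity of $g$ on the whole segment $[W,W+\Delta_r]$; A3 gives $C^2$ flows, and we use dominated convergence together with the uniform A5 bounds at every intermediate point $\xi$. Second, $g$ must be extended to non-lattice points, which is why A5 is required uniformly in $\hat w$. Combining the pieces gives $\mathbb E[\cdot\mid\nu_M]\le \tfrac12\, cM\cdot O(1/M^2)=O(1/M)$.

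For the annealed claim, use
\[
\|X-x^*\|^2\le 2\|X-x^*_{\nu_M}\|^2+2\|x^*_{\nu_M}-x^*\|^2 .
\]
Take the conditional expectation, apply the first part (its constant is uniform in $\nu_M$), then take the expectation over $\nu_M$. By A6 the second term is at most $2L^2\,\mathbb E\|\nu_M-\nu\|_{\mathcal D}^2=O(1/M)$, which gives the second statement.
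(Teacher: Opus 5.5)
Your proposal is correct and follows the paper's proof essentially step for step. It uses the same Stein function (with the opposite sign, which is harmless) and the stationarity identity $\mathbb E[G_M g(W(\infty))\mid\nu_M]=0$; a second-order Taylor expansion whose first-order terms cancel against the quenched drift, with the remainder controlled by an $O(1/M^2)$ Hessian bound obtained from A5 and the boundedness of $x_{\nu_M}(t;\hat w)-x^*_{\nu_M}$; the $O(M)$ total rate from A1; and the triangle inequality with A6 for the annealed bound. The only differences are cosmetic: a Lagrange rather than integral remainder, and the diameter constant $\sqrt2$ rather than $2$.
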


\begin{proof}
Condition on the realized parameters \(\theta_1,\ldots,\theta_M\), so that the empirical distribution \(\nu_M\) is fixed. Recall the deterministic occupancy trajectory
\[
x_{\nu_M}(t;\hat w)
=
\frac1M\sum_{m=1}^M w_{\nu_M,m}(t;\hat w),
\]
where $w_{\nu_M}(0)=\hat w$ is the initial condition, and the Stein function
\begin{equation}
g_{\nu_M}(\hat w)
=
-\int_0^\infty
\left\|
x_{\nu_M}(t;\hat w)-x_{\nu_M}^*
\right\|^2dt. \label{stein-function}   
\end{equation}
By the occupancy stability condition A4, the integral in \eqref{stein-function} is finite, uniformly in \(M\), \(\nu_M\), and \(\hat w\) so the Stein function is well defined. 

Differentiating \(g_{\nu_M}\) along the quenched mean-field trajectory gives Stein's equation (or the Poisson equation):  
\[
\nabla_{\hat w} g_{\nu_M}(\hat w)\cdot f_{\nu_M}(\hat w)
=
\left\|
x(\hat w)-x_{\nu_M}^*
\right\|^2,
\] 
where 
\[f_{\nu_M}(\hat w)=(f(\hat{w}_1, \psi(x(\hat w));\theta_1),\cdots, f(\hat{w}_M, \psi(x(\hat w));\theta_M) )^\top\] and
\[
x(\hat w)=\frac1M\sum_{m=1}^M \hat w_m.
\] More details about the derivation above can be found in \cite{Yin_16}. 
Let \(G_M\) denote the generator of the finite \(M\)-agent CTMC. Since \(W(\infty)\) is distributed according to the stationary distribution of the finite CTMC, by definition of the steady-state, 
\[
\mathbb E\left[
G_M g_{\nu_M}(W(\infty))
\,\middle|\,\nu_M
\right]
=0.
\]
Therefore,
\begin{equation}
\mathbb E\left[
\left\|
X_{\nu_M}(\infty)-x_{\nu_M}^*
\right\|^2
\,\middle|\,\nu_M
\right]
=
\mathbb E\left[
\nabla g_{\nu_M}(W(\infty))\cdot f_{\nu_M}(W(\infty))
-
G_M g_{\nu_M}(W(\infty))
\,\middle|\,\nu_M
\right].    
\label{stein+ss}
\end{equation}

To prove the theorem, we now show that, uniformly in \(\hat w\),
\[
G_M g_{\nu_M}(\hat w)
=
\nabla g_{\nu_M}(\hat w)\cdot f_{\nu_M}(\hat w)
+
O\!\left(\frac1M\right).
\]
Let \(\Delta\) be the change in the full state vector caused by a single transition of the CTMC. By Taylor's theorem,
\[
g_{\nu_M}(\hat w+\Delta)
=
g_{\nu_M}(\hat w)
+
\nabla g_{\nu_M}(\hat w)\cdot \Delta
+
R(\hat w,\Delta),
\]
where
\begin{equation}
R(\hat w,\Delta)
=
\int_0^1
(1-s)
\nabla_{\hat w}^2 g_{\nu_M}(\hat w+s\Delta)[\Delta,\Delta]
\,ds    \label{eq:R}
\end{equation}
and 
\[\nabla_{\hat w}^2 g_{\nu_M}(\hat w+s\Delta)[\Delta,\Delta]
=
\left.
\frac{d^2}{d\varepsilon^2}
g_{\nu_M}(\hat w+s\Delta+\varepsilon\Delta)
\right|_{\varepsilon=0}.\]

We next compute the second derivative of \(g_{\nu_M}\) in the direction \(\Delta\).
Define
\[
\phi(\varepsilon)
=
g_{\nu_M}(\hat w+\varepsilon \Delta).
\]
Then
\[
\nabla_{\hat w}^2 g_{\nu_M}(\hat w)[\Delta,\Delta]
=
\phi''(0).
\]
Since
\[
g_{\nu_M}(\hat w)
=
-\int_0^\infty
\|x_{\nu_M}(t;\hat w)-x_{\nu_M}^*\|^2dt,
\]
we have
\[
\phi(\varepsilon)=g_{\nu_M}(\hat w+\varepsilon \Delta)
=
-\int_0^\infty
\left\|
x_{\nu_M}(t;\hat w+\varepsilon\Delta)
-
x_{\nu_M}^*
\right\|^2dt.
\]

Differentiating with respect to \(\varepsilon\),
\[
\phi'(\varepsilon)
=
-2
\int_0^\infty
\Bigl\langle
x_{\nu_M}(t;\hat w+\varepsilon\Delta)-x_{\nu_M}^*,
\nabla_{\hat w}
x_{\nu_M}(t;\hat w+\varepsilon\Delta)[\Delta]
\Bigr\rangle
dt,
\] where \[
\nabla_{\hat w}x_{\nu_M}(t;\hat w+\varepsilon\Delta)[\Delta]
=
\left.
\frac{d}{d\eta}
x_{\nu_M}\bigl(t;\hat w+\varepsilon\Delta+\eta\Delta\bigr)
\right|_{\eta=0}.
\] Note that we interchanged the differentiation  and integration based on Assumptions A4–A5 and the dominated convergence theorem.

Differentiating again and evaluating at \(\varepsilon=0\),
\begin{align*}
\phi''(0)
=\nabla_{\hat w}^2 g_{\nu_M}(\hat w)[\Delta,\Delta]=
-2
\int_0^\infty
\left\|
\nabla_{\hat w}
x_{\nu_M}(t;\hat w)[\Delta]
\right\|^2
dt
-2
\int_0^\infty
\Bigl\langle
x_{\nu_M}(t;\hat w)-x_{\nu_M}^*,
\nabla_{\hat w}^2
x_{\nu_M}(t;\hat w)[\Delta,\Delta]
\Bigr\rangle
dt.
\end{align*}
From Assumption A5, there exists $C_1$ independent of \(M\), \(\nu_M\), \(\hat w\), and \(\Delta\), such that
\[
\int_0^\infty
\left\|
\nabla_{\hat w}x_{\nu_M}(t;\hat w)[\Delta]
\right\|^2dt
\le
\frac{C_1}{M^2},
\]
and
\[
\int_0^\infty
\left\|
\nabla_{\hat w}^2x_{\nu_M}(t;\hat w)[\Delta,\Delta]
\right\|dt
\le
\frac{C_1}{M^2}.
\]
Since \(x_{\nu_M}(t;\hat w)\) and \(x_{\nu_M}^*\) are occupancy measures, by the triangle inequality, we have 
\[
\left\|
x_{\nu_M}(t;\hat w)-x_{\nu_M}^*
\right\|
\le 2.
\]
Therefore, there exists $C$ independent of \(M\), \(\nu_M\), \(\hat w\), and \(\Delta\), such that
\[
\left|
\nabla_{\hat w}^2 g_{\nu_M}(\hat w)[\Delta,\Delta]
\right|
\le
\frac{C}{M^2}.
\]
Consequently, according to the definition of $R(\hat w,\Delta)$ \eqref{eq:R}, 
\[
|R(\hat w,\Delta)|
\le
\frac{C}{M^2}.
\] 

Let \(\mathcal R_M(\hat w)\) denote the set of possible transitions from state
\(\hat w\). For each transition \(r\in\mathcal R_M(\hat w)\), let
\(q_r(\hat w)\)
denote its transition rate and let
\(\Delta_r(\hat w)\)
denote the corresponding state change. Then the generator of the finite CTMC is
\[
\begin{aligned}
(G_M g_{\nu_M})(\hat w)
=&
\sum_{r\in\mathcal R_M(\hat w)}
q_r(\hat w)
\Bigl(
g_{\nu_M}(\hat w+\Delta_r(\hat w))
-
g_{\nu_M}(\hat w)
\Bigr)\\
=& \sum_{r\in\mathcal R_M(\hat w)}
q_r(\hat w)
\left(\nabla g_{\nu_M}(\hat w)\cdot \Delta_r(\hat w)
+O\left(\frac{1}{M^2}\right)\right)\\
=_{(a)}&\nabla g_{\nu_M}(\hat w)\cdot f_{\nu_M}(\hat w) + O\left(\frac{1}{M^2}\right) \sum_{r\in\mathcal R_M(\hat w)}
q_r(\hat w)\\
=_{(b)}&\nabla g_{\nu_M}(\hat w)\cdot f_{\nu_M}(\hat w)
+
O\left(\frac1M\right).
\end{aligned}
\]
where equality $(a)$ holds due to the definition of $f_{\nu_M}$, and $(b)$ holds because $\sum_{r\in\mathcal R_M(\hat w)}
q_r(\hat w)=O(M)$ according to A1 and A2.  Substituting this into \eqref{stein+ss} leads to the $O(1/M)$ bound: 
\[
\mathbb E\!\left[
\left\|
X_{\nu_M}(\infty)-x_{\nu_M}^*
\right\|^2
\,\middle|\,\nu_M
\right]
=
O\!\left(\frac1M\right).
\]

Finally, under the equilibrium regularity condition A6,
\[
\mathbb E
\left[
\|x_{\nu_M}^*-x^*\|^2
\right]
=
O\!\left(\frac1M\right).
\]
Therefore,
\[
\mathbb E
\left[
\|X_{\nu_M}(\infty)-x^*\|^2
\right]
\le
2\mathbb E
\left[
\|X_{\nu_M}(\infty)-x_{\nu_M}^*\|^2
\right]
+
2\mathbb E
\left[
\|x_{\nu_M}^*-x^*\|^2
\right]
=
O\left(\frac1M\right).
\]
This completes the proof.
\end{proof}

\section{Application: A Heterogeneous SIS Model}

In this section, we apply our main result to a heterogeneous
SIS model population where each agent has an intrinsic
parameter $\theta_m$ which is independently drawn from the uniform distribution, i.e.,
\[
\theta_m
\overset{i.i.d.}{\sim}
\nu,
\qquad
\nu=\mathcal U[0,1].
\]

Each agent is in one of the two states, susceptible,
denoted by $S$, or infected, denoted by $I$. For the SIS model, we can use scalar $W_m(t)\in\{0,1\}$ to represent the state of agent $m$ such that $W_m(t)=1$ if agent $m$ is infected.  The empirical
occupancy measure can be determined by a scalar as well: 
\[
X_{\nu_M}(t)
=
\frac1M
\sum_{m=1}^M
W_m(t),
\] which is the fraction of infected agents. For each agent, the recovery time is exponential with mean one.
Each infected agent randomly selects another after waiting
for a random time that is exponential with
mean one. If the selected agent, say agent $m,$ is a susceptible node, it
becomes infected with probability $\theta_m$. Each susceptible node, after it becomes susceptible,
may also get infected by an external infection source after
a random time period that is exponentially  distributed with
mean $1/\alpha$.

\subsection{Mean-Field Models and Mean-Field Equilibrium}
Under the mean-field approximation, the infection rate experienced by agent $m$ is $\alpha+\theta_m z(t),$ where $z(t)=x_{\nu_M}(t)
=
\frac1M
\sum_{m=1}^M w_m(t).$
For a fixed realization of the empirical distribution
$\nu_M=
\frac1M
\sum_{m=1}^M
\delta_{\theta_m},
$
the {\bf quenched} mean-field model is
\[
\dot w_m(t)
=
(1-w_m(t))(\alpha+\theta_m z(t))
-
 w_m(t).
\]
The quenched mean-field equilibrium satisfies: 
\[x^*_{\nu_M}=
\frac1M
\sum_{m=1}^M
\frac{\alpha+\theta_m x^*_{\nu_M}}
{1+\alpha+\theta_m x^*_{\nu_M}}.
\]

The {\bf annealed} mean-field model is
\[
\dot w(t,\theta)
=
(1-w(t,\theta))(\alpha+\theta z(t))
-
w(t,\theta),
\qquad
\theta\in[0,1],
\]
where
\[
z(t)
=
\int_0^1 w(t,\theta)\,d\theta .
\]
The annealed mean-field equilibrium satisfies 
\[
x^*
=
\int_0^1
\frac{\alpha+\theta x^*}
{1+\alpha+\theta x^*}
\,d\theta .
\]

\begin{lemma}
The quenched equilibrium $x^*_{\nu_M}$ exists and is unique for every
realized empirical distribution $\nu_M$. The annealed mean field equilibrium exists and is unique as well.    
\end{lemma}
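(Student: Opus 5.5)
Both equilibria are fixed points of a scalar equation $x=F(x)$ on $[0,1]$, so I would handle them with one argument. For the quenched case set
\[
F_{\nu_M}(x)=\frac1M\sum_{m=1}^M h_{\theta_m}(x),\qquad h_\theta(x)=\frac{\alpha+\theta x}{1+\alpha+\theta x},
\]
and for the annealed case set $F(x)=\int_0^1 h_\theta(x)\,d\theta$. Both are averages of $h_\theta$, one against $\nu_M$ and one against $\nu$. I will therefore prove the claim for $F_\mu(x)=\int h_\theta(x)\,\mu(d\theta)$, where $\mu$ is an arbitrary probability measure on $[0,1]$. I assume $\alpha>0$, as implicit in the model.

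\textbf{Existence.} Consider $D(x)=F_\mu(x)-x$ on $[0,1]$. Since $h_\theta(0)=\alpha/(1+\alpha)>0$, we get $D(0)>0$. Since $h_\theta(x)<1$ for all $x\ge 0$, we get $D(1)=F_\mu(1)-1<0$. Continuity of $D$ (each $h_\theta$ is continuous and bounded, so dominated convergence applies in the annealed case) and the intermediate value theorem give a root in $(0,1)$.

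\textbf{Uniqueness.} I would show that $F_\mu$ is a strict contraction on $[0,1]$, and uniqueness then follows immediately. Write $h_\theta(x)=1-1/(1+\alpha+\theta x)$. Then
\[
h_\theta'(x)=\frac{\theta}{(1+\alpha+\theta x)^2}\le\frac{\theta}{(1+\alpha)^2}\le\frac{1}{(1+\alpha)^2}<1
\]
for $x\ge0$ and $\theta\in[0,1]$. Hence $0\le F_\mu'(x)<1$, with differentiation under the integral justified by the uniform bound. So $D'(x)=F_\mu'(x)-1<0$, $D$ is strictly decreasing, and it has exactly one root.

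It remains to check that every equilibrium must lie in $[0,1]$, so that no solution of the fixed-point equation exists outside this interval. This is immediate because $x^*$ is an occupancy fraction by definition. Alternatively, $F_\mu$ maps $[0,\infty)$ into $(0,1)$, so any nonnegative fixed point lies in $(0,1)$. I do not expect a real obstacle anywhere. The only point needing care is the derivative bound, which relies on $\theta\le 1$ and on $\alpha>0$ through $h_\theta(0)>0$. Should $\alpha=0$ be allowed, the bound $F'<1$ still holds, but $x=0$ becomes a fixed point and uniqueness would require separate treatment, so I will state the assumption $\alpha>0$ explicitly.
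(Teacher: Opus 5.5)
Your proof is correct and rests on the same two facts as the paper's: $F_\mu$ maps $[0,1]$ into $(0,1)$, and $F_\mu'\le (1+\alpha)^{-2}<1$. The only difference is that you get existence from the intermediate value theorem and uniqueness from strict monotonicity of $F_\mu(x)-x$, whereas the paper invokes the Banach fixed-point theorem for both at once; your explicit $\alpha>0$ is also implicit in the paper's contraction constant, and your arbitrary-$\mu$ formulation covers the quenched and annealed cases simultaneously, as the paper's ``essentially identical'' intends.
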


\begin{proof} We first consider the quenched mean-field system. Since $x_{\nu_M}(t)=z(t),$ we next prove that $z^*$ exists and is unique. 
Let
\[
z^*
=
\frac1M
\sum_{m=1}^M w_m^*.
\]
The equilibrium equation satisfies
\begin{equation}
0=(1-w_m^*)(\alpha+\theta_m z^*)-w_m^*.
\label{eq:sis-w}    
\end{equation}

Solving for $w_m^*$ gives
\[
w_m^*
=
\frac{\alpha+\theta_m z^*}
{1+\alpha+\theta_m z^*}.
\]
Therefore $z^*$ satisfies the fixed-point equation
\[
z^*=F_{\nu_M}(z^*),
\]
where
\[
F_{\nu_M}(z)
=
\frac1M
\sum_{m=1}^M
\frac{\alpha+\theta_m z}
{1+\alpha+\theta_m z}.
\]

For $z\in[0,1]$,
\[
0<F_{\nu_M}(z)<1.
\]
Furthermore,
\[
F_{\nu_M}'(z)
=
\frac1M
\sum_{m=1}^M
\frac{\theta_m}
{(1+\alpha+\theta_m z)^2}
\le
\frac1{(1+\alpha)^2}
<1.
\]
Hence $F_{\nu_M}$ is a contraction mapping on $[0,1]$.
By the Banach fixed-point theorem, there exists a unique
fixed point $z^*\in[0,1]$, which uniquely determines
\[
w_m^*
=
\frac{\alpha+\theta_m z^*}
{1+\alpha+\theta_m z^*}.
\]
Therefore the equilibrium exists and is unique. 

For the annealed model, the proof is essentially identical, with \[
F(z)
=
\int_0^1
\frac{\alpha+\theta z}
{1+\alpha+\theta z}
\,d\theta.
\]

\end{proof}

\subsection{Verifying the assumptions of Theorem \ref{thm:main}}
To establish the $O(1/M)$ steady-state approximation error, we now verify the required assumptions. It is straightforward to see that Assumptions A1-A3 hold. The following lemmas verify A4, A5 and A6. 

\begin{lemma}[Occupancy Stability (A4)]
Occupancy stability holds because the system is exponentially stable. 
\end{lemma}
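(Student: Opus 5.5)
We need to prove A4 for the heterogeneous SIS quenched model: there is a $C_0$, independent of $M$, $\nu_M$ and $\hat w$, with $\int_0^\infty |x_{\nu_M}(t;\hat w)-x^*_{\nu_M}|^2dt\le C_0$. The plan is to show that the per-agent deviations $e_m(t)=w_m(t)-w_m^*$ decay exponentially at a rate independent of $M$. The occupancy deviation $x-x^*=\frac1M\sum_m e_m$ then inherits the same exponential bound.

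\textbf{Step 1: linear equation for the deviations.} Subtract the equilibrium relation \eqref{eq:sis-w} from the quenched dynamics. Using $z(t)=x(t)$ and $z^*=x^*_{\nu_M}$ (which exists by the previous lemma), this gives
\[
\dot e_m=-(1+\alpha+\theta_m z(t))\,e_m+(1-w_m^*)\,\theta_m\,(z(t)-z^*).
\]
Here $w_m(t)\in[0,1]$ for all $t$, since $[0,1]$ is invariant: $\dot w_m\ge 0$ at $w_m=0$ and $\dot w_m\le0$ at $w_m=1$. Consequently $z(t)\in[0,1]$, and all coefficients are bounded uniformly.

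\textbf{Step 2: a differential inequality for the sup-deviation.} Let $V(t)=\max_m|e_m(t)|$. Since $|z-z^*|\le \frac1M\sum_m|e_m|\le V$, $\theta_m\le1$ and $1-w_m^*\le \frac{1}{1+\alpha}$, the upper Dini derivative of $|e_m|$ for the maximizing index satisfies
\[
D^+V\le -(1+\alpha)V+\tfrac{1}{1+\alpha}V=-\lambda V,\qquad \lambda=(1+\alpha)-\tfrac1{1+\alpha}>0 .
\]
The key quantity is the factor $(1-w_m^*)\le 1/(1+\alpha)$. Even without it, the bound $1-w_m^*\le 1$ still gives $\lambda=\alpha>0$ whenever $\alpha>0$, which I will assume as part of the model.

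\textbf{Step 3: integrate.} By Grönwall, $V(t)\le V(0)e^{-\lambda t}\le e^{-\lambda t}$. Hence $|x_{\nu_M}(t;\hat w)-x^*_{\nu_M}|\le V(t)\le e^{-\lambda t}$ and
\[
\int_0^\infty |x_{\nu_M}(t;\hat w)-x^*_{\nu_M}|^2dt\le \frac{1}{2\lambda}=:C_0 ,
\]
with $C_0$ independent of $M$, $\nu_M$ and $\hat w$.

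The main obstacle is Step 2. The difficulty is making the max-norm argument rigorous, because $V$ is only Lipschitz; the standard Dini-derivative argument handles this. The estimate must also stay uniform over arbitrary initial conditions in $[0,1]^M$. An alternative is a linearization and Jacobian-spectrum argument, but it would give only local stability, so I prefer the global contraction in the $\ell_\infty$ norm above.
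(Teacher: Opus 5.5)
Your proposal is correct and follows the paper's proof essentially step for step. It uses the same deviation equation, the same bound $\theta_m(1-w_m^*)\le 1/(1+\alpha)$, the same max-norm Dini-derivative comparison with rate $(1+\alpha)-\frac{1}{1+\alpha}$, and the same final bound $1/(2\gamma)$. Your additional remarks make explicit two points the paper leaves implicit: the invariance of $[0,1]$, which justifies $z(t)\ge 0$, and the need for $\alpha>0$.
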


\begin{proof}
Define
\[
\eta_m(t)
=
w_m(t)-w_{\nu_M,m}^*,
\]
and
\[
\bar\eta(t)
=
z(t)-z_{\nu_M}^*
=
\frac1M
\sum_{m=1}^M
\eta_m(t).
\]

Subtracting the equilibrium equation from the mean-field
equation gives
\[
\dot\eta_m(t)
=
-(1+\alpha+\theta_m z(t))\eta_m(t)
+
\theta_m(1-w_{\nu_M,m}^*)\bar\eta(t).
\]
Since
\(
1+\alpha+\theta_m z(t)
\ge
1+\alpha,
\)
and according to \eqref{eq:sis-w}, 
\[
\theta_m(1-w_{\nu_M,m}^*)
=
\frac{\theta_m}
{1+\alpha+\theta_m z_{\nu_M}^*}
\le
\frac1{1+\alpha},
\]
we obtain
\[
D^+|\eta_m(t)|
\le
-(1+\alpha)|\eta_m(t)|
+
\frac1{1+\alpha}
|\bar\eta(t)|,
\] where
\[
D^+f(t)
=
\limsup_{h\downarrow 0}
\frac{f(t+h)-f(t)}{h}
\]
denotes the upper right Dini derivative of \(f\). 

Define
\[
\eta_{\max}(t)
=
\max_{1\le m\le M}
|\eta_m(t)|.
\]
Then
\[
|\bar\eta(t)|
=
\left|
\frac1M
\sum_{m=1}^M
\eta_m(t)
\right|
\le
\eta_{\max}(t).
\]
Therefore
\[
D^+\eta_{\max}(t)
\le
-
\left(
(1+\alpha)-\frac1{1+\alpha}
\right)
\eta_{\max}(t).
\]

By the comparison lemma \cite{Kha_01},
\[
\eta_{\max}(t)
\le
e^{-\gamma t}
\eta_{\max}(0),
\]
where
\[
\gamma
=
(1+\alpha)-\frac1{1+\alpha}
=
\frac{\alpha(2+\alpha)}{1+\alpha}.
\]
Therefore, we obtain
\[
|x_{\nu_M}(t;\hat w)-x_{\nu_M}^*|^2
=
\left|
\frac1M\sum_{m=1}^M \eta_m(t)
\right|^2
\le
\eta^2_{\max}(t)
\le
e^{-2\gamma t}
\eta^2_{\max}(0).
\]

Since 
\[
|\eta_m(0)|=|w_m(0) -w_{\nu_M,m}^*| \le 1,
\]
we conclude
\[
\int_0^\infty
|x_{\nu_M}(t;\hat w)-x_{\nu_M}^*|^2dt
\le
\int_0^\infty e^{-2\gamma t}dt
=
\frac{1}{2\gamma}<\infty .
\]
Thus Assumption A4 holds.

\end{proof}

\begin{lemma}[Occupancy robustness (A5)]
The occupancy robustness
(A5) in Theorem~\ref{thm:main} holds.
\end{lemma}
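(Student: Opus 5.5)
We work with the variational (first and second order sensitivity) equations of the quenched SIS mean-field system and reuse the max-norm comparison argument from the proof of Lemma (A4). A single CTMC transition changes the state of exactly one agent by $\pm1$; in the SIS model recoveries, external infections and contact infections each flip a single agent. So $\Delta=\pm e_k$ for some $k$. The key observation is that the perturbation then has size $1$ in one coordinate only, so its contribution to the occupancy $x=\frac1M\sum_m w_m$ is $O(1/M)$ at time zero. We must show it remains $O(1/M)$ and decays exponentially.

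\textbf{First-order term.} Set $u_m(t)=\nabla_{\hat w}w_m(t;\hat w)[\Delta]$ and $\bar u=\frac1M\sum_m u_m=\nabla_{\hat w}x_{\nu_M}(t;\hat w)[\Delta]$. Differentiating $\dot w_m=(1-w_m)(\alpha+\theta_m z)-w_m$ gives
\[
\dot u_m=-(1+\alpha+\theta_m z)u_m+\theta_m(1-w_m)\bar u .
\]
Because $\theta_m(1-w_m)\le 1$, the crude max-norm argument does not close. We therefore first bound $\bar u$ directly. Averaging, $\dot{\bar u}=-\frac1M\sum_m(1+\alpha+\theta_m z)u_m+\bar u\,\frac1M\sum_m\theta_m(1-w_m)$, which is not closed in $\bar u$. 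Instead we split $u_m=\delta_{mk}v(t)+r_m(t)$. Here $v$ solves $\dot v=-(1+\alpha+\theta_k z)v$ with $v(0)=\pm1$, so $|v|\le e^{-(1+\alpha)t}$. The remainder $r_m$ satisfies $\dot r_m=-(1+\alpha+\theta_m z)r_m+\theta_m(1-w_m)\bar u$ with $r_m(0)=0$ and $\bar u=\frac1M v+\bar r$.

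Let $\rho(t)=\max_m|r_m(t)|$. A Dini-derivative comparison gives $D^+\rho\le-(1+\alpha)\rho+\rho+\frac1M e^{-(1+\alpha)t}$, that is, $D^+\rho\le -\alpha\rho+\frac1M e^{-(1+\alpha)t}$. Hence $\rho(t)\le \frac{1}{M}te^{-\alpha t}$ (or a similar bound), and therefore $|\bar u(t)|\le \frac CM e^{-\alpha t/2}$. This yields $\int_0^\infty|\bar u|^2dt\le C_1/M^2$.

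\textbf{Second-order term.} Set $h_m=\nabla^2_{\hat w}w_m[\Delta,\Delta]$ and $\bar h=\frac1M\sum_m h_m$. Differentiating once more gives
\[
\dot h_m=-(1+\alpha+\theta_m z)h_m+\theta_m(1-w_m)\bar h-2\theta_m u_m\bar u,
\]
with $h_m(0)=0$. The forcing satisfies $|\theta_m u_m\bar u|\le (|\delta_{mk}v|+\rho)\frac CM e^{-\alpha t/2}$. Its average over $m$ is $O(1/M^2)e^{-ct}$, but its max over $m$ is only $O(1/M)$, coming from the $m=k$ term. We therefore again split $h_m=\delta_{mk}a(t)+b_m(t)$. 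The component $a$ absorbs the $O(1/M)$ forcing at agent $k$, so $|a|\le \frac CM e^{-ct}$. The remaining forcing for $b_m$ is $O(1/M^2)e^{-ct}$ uniformly in $m$, together with coupling through $\bar h=\frac1M a+\bar b$, whose first piece is also $O(1/M^2)$. The same comparison argument then gives $\max_m|b_m|\le \frac{C}{M^2}e^{-ct}$, so $|\bar h|\le \frac{C}{M^2}e^{-ct}$ and $\int_0^\infty|\bar h|\,dt\le C_1/M^2$.

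\textbf{Main obstacle.} The hard step is closing the max-norm comparison when the coupling coefficient $\theta_m(1-w_m)$ along a general trajectory is bounded only by $1$, not by $1/(1+\alpha)$ as at equilibrium. The plan above relies on the net decay rate $(1+\alpha)-1=\alpha>0$, which holds whenever $\alpha>0$. If a sharper rate were needed, one could instead use a weighted norm, but $\alpha>0$ suffices for constants independent of $M$, $\nu_M$ and $\hat w$. The separation of the single perturbed coordinate $k$ is the essential device that turns the $O(1)$ initial perturbation into $O(1/M)$ and $O(1/M^2)$ occupancy-level bounds.
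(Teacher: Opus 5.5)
Your proof is correct, but it takes a more laborious route than the paper. For A5 the paper does not use the max norm. It measures the sensitivities in the averaged $\ell^1$ norm $L_1(t)=\frac1M\sum_m|w_m^{(1)}(t)|$. Since $|x^{(1)}|\le L_1$ and $\theta_m(1-w_m)\le 1$, the coupling term is controlled by $L_1$ itself, so $D^+L_1\le-\alpha L_1$. The one-agent perturbation is automatically small in this norm because $L_1(0)=\frac1M$, and hence $|x^{(1)}(t)|\le e^{-\alpha t}/M$ with no decomposition. At second order the same norm averages the forcing to $2|x^{(1)}|L_1\le 2e^{-2\alpha t}/M^2$. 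This is exactly your observation that ``its average over $m$ is $O(1/M^2)$'', which you then have to implement by hand by peeling off $\delta_{mk}a(t)$. The paper obtains $L_2(t)\le \frac{2e^{-\alpha t}}{\alpha M^2}$ and the explicit constant $C_1=\frac{2}{\alpha^2}+\frac{1}{2\alpha}$.

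Your splittings $u_m=\delta_{mk}v+r_m$ and $h_m=\delta_{mk}a+b_m$ are what the max norm forces on you, because it sees the $O(1)$ perturbation at agent $k$. The crude max-norm comparison actually does close at rate $\alpha$; the only problem is that it starts from $1$ rather than $1/M$. Your comparison for $\rho$ is also sharper than you state: it gives $\rho(t)\le\frac1M e^{-\alpha t}(1-e^{-t})$. Then $|\bar u(t)|\le \frac1M e^{-(1+\alpha)t}+\rho(t)\le \frac1M e^{-\alpha t}$, which recovers the paper's first-order bound exactly, without the $e^{-\alpha t/2}$ loss.

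In exchange, your route gives agent-wise information. Every unperturbed agent has sensitivity $O(1/M)$ at first order and $O(1/M^2)$ at second order, uniformly. The $O(1)$ part of the perturbed agent's sensitivity decays at the faster rate $1+\alpha$. That would help in controlling functionals of $w$ other than the plain average, but for A5 the paper's $\ell^1$ argument is shorter and gives cleaner constants.
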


\begin{proof}
Let \(\Delta\) be a single-transition direction of the CTMC. For the SIS model, \(\Delta\) changes the state of only one agent, so
\[
\frac1M\sum_{m=1}^M |\Delta_m| \le \frac{1}{M}.
\]

For notational convenience, define
\[
w_m^{(1)}(t)
=
\nabla_{\hat w}w_{\nu_M,m}(t;\hat w)[\Delta],
\qquad
x^{(1)}(t)
=
\nabla_{\hat w}x_{\nu_M}(t;\hat w)[\Delta]
=
\frac1M\sum_{m=1}^M w_m^{(1)}(t).
\]
Differentiating the SIS mean-field equation with respect to the perturbation parameter $\epsilon$ along direction $\Delta$ gives
\[
\dot w_m^{(1)}(t)
=
-(1+\alpha+\theta_m x(t))w_m^{(1)}(t)
+
\theta_m(1-w_m(t))x^{(1)}(t),
\]
Since \(0\le \theta_m\le 1\), \(0\le w_m(t)\le 1\), we have 
\[
D^+|w_m^{(1)}(t)|
\le
-(1+\alpha)
|w_m^{(1)}(t)|
+
\theta_m(1-w_m(t))
|x^{(1)}(t)|.
\]
Now let
\[
L_1(t)
=
\frac1M\sum_{m=1}^M |w_m^{(1)}(t)|.
\]
Since  \(|x^{(1)}(t)|\le L_1(t)\), we have
\[
D^+ L_1(t)
\le
-(1+\alpha)L_1(t)+L_1(t)
=
-\alpha L_1(t).
\]
Therefore,
\[
L_1(t)\le e^{-\alpha t}L_1(0).
\]
Since \(\Delta\) changes only one agent, 
\[
L_1(0)
=
\frac1M\sum_{m=1}^M |\Delta_m|
=
\frac{1}{M}
\]
and \begin{equation}
\label{eq:1st-bd}
|x^{(1)}(t)|
\le
L_1(t)
\le
\frac{ e^{-\alpha t}}{M},    
\end{equation}
which implies that
\[
\int_0^\infty |x^{(1)}(t)|^2dt
\le
\frac{1}{2\alpha M^2}.
\]

Next define
\[
w_m^{(2)}(t)
=
\nabla_{\hat w}^2w_{\nu_M,m}(t;\hat w)[\Delta,\Delta],
\qquad
x^{(2)}(t)
=
\nabla_{\hat w}^2x_{\nu_M}(t;\hat w)[\Delta,\Delta]
=
\frac1M\sum_{m=1}^M w_m^{(2)}(t).
\]
Differentiating the first-order system once more gives
\[
\dot w_m^{(2)}(t)
=
-(1+\alpha+\theta_m x(t))w_m^{(2)}(t)
+
\theta_m(1-w_m(t))x^{(2)}(t)
-
2\theta_m w_m^{(1)}(t)x^{(1)}(t).
\]
Also \(w_m^{(2)}(0)=0\) because the initial perturbation is linear in \(\Delta\). Since \(0\le \theta_m\le 1\), \(0\le w_m(t)\le 1\), we have 
\[
D^+ |w_m^{(2)}(t)|
\le
-(1+\alpha)|w_m^{(2)}(t)|
+
|x^{(2)}(t)|
+
2|w_m^{(1)}(t)||x^{(1)}(t)|.
\]
Let
\[
L_2(t)
=
\frac1M\sum_{m=1}^M |w_m^{(2)}(t)|.
\]
Using \(|x^{(2)}(t)|\le L_2(t)\), we obtain
\[
D^+L_2(t)
\le
-(1+\alpha)L_2(t)+L_2(t)
+
2|x^{(1)}(t)|L_1(t).
\]
Thus
\[
D^+L_2(t)
\le
-\alpha L_2(t)
+
2|x^{(1)}(t)|L_1(t).
\]
Using the bound \eqref{eq:1st-bd}, we have
\[
|x^{(1)}(t)|L_1(t)
\le
\frac{e^{-2\alpha t}}{M^2}.
\]
Therefore,
\[
D^+L_2(t)
\le
-\alpha L_2(t)
+
\frac{2e^{-2\alpha t}}{M^2}.
\]
Since \(L_2(0)=0\), the comparison lemma \cite{Kha_01} gives
\[
L_2(t)
\le\int_0^te^{-\alpha(t-s)} \frac{2e^{-2\alpha s}}{M^2} \ ds \le 
\frac{2e^{-\alpha t}}{\alpha M^2}.
\]
Hence
\[
\int_0^\infty |x^{(2)}(t)|dt
\le\int_0^\infty L_2(t)dt\le
\frac{2}{\alpha^2}\frac{1}{M^2}.
\]
Therefore, A5 holds by choosing $C_1=\frac{2}{\alpha^2}+\frac{1}{2\alpha}.$
\end{proof}

\begin{lemma}[Equilibrium regularity condition (A6)]
For the heterogeneous SIS model, define
\[
\|\rho-\eta\|_{\mathcal D}
:=
\sup_{z\in[0,1]}
\left|
\int_0^1
\frac{\alpha+\theta z}{1+\alpha+\theta z}
\,\rho(d\theta)
-
\int_0^1
\frac{\alpha+\theta z}{1+\alpha+\theta z}
\,\eta(d\theta)
\right|.
\]
Then the equilibrium map
is Lipschitz continuous with respect to
\(\|\cdot\|_{\mathcal D}\) and
\[
\mathbb E\|\nu_M-\nu\|_{\mathcal D}^2
=
O\left(\frac1M\right).
\]
\end{lemma}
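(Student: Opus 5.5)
The plan has two parts: a Lipschitz estimate for the equilibrium map, obtained from the contraction property of the fixed-point map, and a uniform concentration bound for the empirical measure over the one-parameter family $h_z(\theta)=\frac{\alpha+\theta z}{1+\alpha+\theta z}$, $z\in[0,1]$.

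\textbf{Lipschitz continuity.} Write $F_\rho(z)=\int_0^1 h_z(\theta)\,\rho(d\theta)$ for a probability measure $\rho$ on $[0,1]$, so that $\|\rho-\eta\|_{\mathcal D}=\sup_{z\in[0,1]}|F_\rho(z)-F_\eta(z)|$. By the existence/uniqueness lemma, $x^*_{\nu_M}=F_{\nu_M}(x^*_{\nu_M})$ and $x^*=F_\nu(x^*)$. The derivative bound there applies to any measure on $[0,1]$: $|F_\rho'(z)|\le \kappa:=1/(1+\alpha)^2<1$. Then
\[
|x^*_{\nu_M}-x^*|\le |F_{\nu_M}(x^*_{\nu_M})-F_{\nu_M}(x^*)|+|F_{\nu_M}(x^*)-F_\nu(x^*)|\le \kappa|x^*_{\nu_M}-x^*|+\|\nu_M-\nu\|_{\mathcal D}.
\]
This gives the Lipschitz bound with $L=1/(1-\kappa)=(1+\alpha)^2/(\alpha(2+\alpha))$.

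\textbf{Concentration.} Here $\|\nu_M-\nu\|_{\mathcal D}=\sup_z|S_M(z)|$, where
\[
S_M(z)=\frac1M\sum_{m=1}^M\bigl(h_z(\theta_m)-\mathbb E h_z(\theta)\bigr).
\]
Each summand is bounded by $1$ and has mean zero, so for fixed $z$ we have $\mathbb E S_M(z)^2\le 1/M$. The obstacle is the supremum over $z$. I plan to handle it by a derivative (Sobolev-type) bound rather than chaining. For any $C^1$ function $S$ on $[0,1]$,
\[
\sup_z|S(z)|\le |S(0)|+\int_0^1|S'(z)|\,dz,
\]
so by Cauchy--Schwarz
\[
\sup_z|S(z)|^2\le 2S(0)^2+2\int_0^1 S'(z)^2\,dz.
\]
Now $S_M'(z)=\frac1M\sum_m\bigl(\partial_z h_z(\theta_m)-\mathbb E\,\partial_z h_z(\theta)\bigr)$, with
\[
\partial_z h_z(\theta)=\frac{\theta}{(1+\alpha+\theta z)^2}\in[0,1].
\]
These summands are again i.i.d., mean zero and bounded, so $\mathbb E S_M'(z)^2\le 1/M$ for every $z$. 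Taking expectations and applying Fubini gives $\mathbb E\|\nu_M-\nu\|_{\mathcal D}^2\le 4/M$.

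One technical point is measurability of the supremum. It is harmless because $S_M$ is continuous in $z$, so the supremum can be taken over rational $z$. Combining the two parts verifies A6, and Theorem~\ref{thm:main} then yields the $O(1/M)$ bound with respect to the annealed equilibrium.
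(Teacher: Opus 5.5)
Your proposal is correct and follows essentially the same route as the paper: the contraction bound $|F_\rho'|\le 1/(1+\alpha)^2$ gives the Lipschitz constant $1/(1-a)$, and the supremum over $z$ is controlled by integrating $S_M'$, applying Cauchy--Schwarz, and using a pointwise variance bound with Fubini. The only difference is that the paper notes $S_M(0)=0$ identically, since $h_0(\theta)=\alpha/(1+\alpha)$ does not depend on $\theta$; this removes the $2S_M(0)^2$ term and the factor $2$, giving the sharper constant $1/(M(1+\alpha)^4)$ in place of your $4/M$.
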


\begin{proof}
For a type distribution \(\rho\), the equilibrium occupancy
\(x_\rho^*\) satisfies
\[
x_\rho^*
=
F_\rho(x_\rho^*),
\]
where
\[
F_\rho(z)
=
\int_0^1
\frac{\alpha+\theta z}{1+\alpha+\theta z}
\,\rho(d\theta).
\]
Define
\[
\phi(\theta,z)
=
\frac{\alpha+\theta z}{1+\alpha+\theta z}.
\]
For \(z\in[0,1]\),
\[
\left|
\frac{\partial \phi}{\partial z}(\theta,z)
\right|
=
\frac{\theta}{(1+\alpha+\theta z)^2}
\le
\frac1{(1+\alpha)^2}
=:a<1.
\]
Thus, for every probability measure \(\rho\),
\[
|F_\rho(z_1)-F_\rho(z_2)|
\le
a|z_1-z_2|.
\]

Let \(x_\rho^*\) and \(x_\eta^*\) be the equilibrium
occupancies associated with \(\rho\) and \(\eta\). Then by the definition of  $\cal D,$
\[
\begin{aligned}
|x_\rho^*-x_\eta^*|
&=
|F_\rho(x_\rho^*)-F_\eta(x_\eta^*)|
\\
&\le
|F_\rho(x_\rho^*)-F_\eta(x_\rho^*)|
+
|F_\eta(x_\rho^*)-F_\eta(x_\eta^*)|
\\
&\le
\|\rho-\eta\|_{\mathcal D}
+
a|x_\rho^*-x_\eta^*|.
\end{aligned}
\]
Therefore,
\[
|x_\rho^*-x_\eta^*|
\le
\frac1{1-a}
\|\rho-\eta\|_{\mathcal D}.
\]
Hence the equilibrium map is Lipschitz continuous.

It remains to verify the empirical-measure concentration
condition. Define
\[
\phi(\theta,z)
=
\frac{\alpha+\theta z}{1+\alpha+\theta z},
\quad\hbox{and}\quad
A_M(z)
=
\int_0^1
\phi(\theta,z)
(\nu_M-\nu) (d\theta).
\]
Then by definition of $\cal D,$
\[
\|\nu_M-\nu\|_{\mathcal D}
=
\sup_{z\in[0,1]}|A_M(z)|.
\]

Since
\(\phi(\theta,0)=\frac{\alpha}{1+\alpha}\)
is constant, we have
\(A_M(0)=0.\)
Therefore,
\[
A_M(z)
=
\int_0^z A_M'(u)\,du,
\] where
\[
A_M'(u)
=
\int_0^1
\frac{\theta}
{(1+\alpha+\theta u)^2}
(\nu_M-\nu)(d\theta),
\]
and hence
\[
\sup_{z\in[0,1]}|A_M(z)|
\le
\int_0^1 |A_M'(u)|\,du.
\]
By the Cauchy-Schwarz inequality,
\[
\|\nu_M-\nu\|_{\mathcal D}^2
\le
\left(
\int_0^1 |A_M'(u)|\,du
\right)^2
\le
\int_0^1 |A_M'(u)|^2\,du.
\]

For each fixed \(u\in[0,1]\), define
\(\kappa_u(\theta)
=
\frac{\theta}
{(1+\alpha+\theta u)^2},\) then
\[
A_M'(u)
=
\frac1M
\sum_{m=1}^M
\kappa_u(\theta_m)
-
\mathbb E[\kappa_u(\theta)],
\] where the expectation is taken over $\theta\sim {\cal U}[0,1].$
Since the \(\theta_m\)'s are i.i.d.,
\[
\mathbb E|A_M'(u)|^2
=
\frac1M
\operatorname{Var}(\kappa_u(\theta)).
\]
Moreover,
\(0\le
\kappa_u(\theta)
\le
\frac1{(1+\alpha)^2},\)
so
\[
\operatorname{Var}(\kappa_u(\theta))
\le
\mathbb E[\kappa_u(\theta)^2]
\le
\frac1{(1+\alpha)^4}.
\]
Thus,
\[
\mathbb E|A_M'(u)|^2
\le
\frac1{M(1+\alpha)^4}
\]
which implies that 
\[
\begin{aligned}
\mathbb E\|\nu_M-\nu\|_{\mathcal D}^2
&\le
\int_0^1
\mathbb E|A_M'(u)|^2\,du
\le
\frac1{M(1+\alpha)^4}.
\end{aligned}
\]
Hence, 
\(\mathbb E\|\nu_M-\nu\|_{\mathcal D}^2
=
O\left(\frac1M\right)\) 
and A6 holds.
\end{proof}

\section{Conclusion}

This paper established an \(O(1/M)\) bound on the mean-square steady-state approximation error of heterogeneous mean-field models. The analysis extends Stein's method for homogeneous systems and identifies occupancy stability and robustness as key conditions underlying the approximation error. The results were illustrated through a heterogeneous SIS epidemic model.

\section*{\large Acknowledgment}
The author would like to thank Prof. Weina Wang of Carnegie Mellon University for sharing her inspiring experience of using ChatGPT to tackle a long-standing open problem in queueing theory \cite{WanAmeChe_26}. Her experience motivated the author to explore whether ChatGPT could similarly be used to extend the $O(1/M)$ bound from homogeneous to heterogeneous mean-field systems, which led to this paper. 
\bibliographystyle{plain}
\bibliography{inlab-refs}
\end{document}